\documentclass[a4paper,11pt,numreferences,mathsec,kaplist]{isueps}
\usepackage{isu}

\begin{document}
\setcounter{aqwe}{2} 
\begin{article}
\begin{opening}
\msc{35L05, 45D05}
\title{Irregular linear systems of PDEs with the conditions on solutions' projections }
\author{N. A. ~\surname{Sidorov}}
\institute{Irkutsk State University}

\runningtitle{Irregular linear systems of PDEs with the conditions on solutions' projections}
\runningauthor{ N. A. Sidorov}

\begin{abstract}
The theory of complete generalized Jordan sets is employed to reduce the PDE  with the irreversible linear operator  $B$ of finite index to the regular problems. It is demonstrated how the question of the choice of boundary conditions is connected with the $B$-Jordan structure of coefficients of PDE. The various approaches shows the combination alternative Lyapunov method, Jordan structure coefficients and skeleton decomposition of  irreversible linear operator from the main part equation are among the most powerfull methods to attack such challenging problem. On this base the complex problem of the {\it correct choice of boundary conditions} for the wide class of the singular PDE can be solved. Aggregated existence and uniqueness theorems can be proved, solution may continuously depend on the function determied from the experiments. Such theory can be applied to the integral--differential equations with partial derivatives.
\end{abstract}

\keywords{degenerate PDE, Jordan set, Banach  space, Noether  operator, boundary value problems}

\end{opening}

\avtogl{ N. A. Sidorov}{Irregular linear systems of PDEs with conditions on solutions's projections
}
\avtogle{N. Sidorov}{Irregular linear systems of PDEs with conditions on solutions's projections}


\hspace*{3.7cm}{\it This paper is dedicated to the 100th anniversary~of} \\
\hspace*{8.5cm}{\it Irkutsk State University}

\section{Introduction}

Let  $B$ and $A_i$, $i=\overline{1,q}$- are closed linear operators acting from $E_1$ to $E_2$ with the dense domains in $E_1$. $E_1, E_2$ are Banach spaces, $D(B)\subseteq D(A_i),$ $B$ is the operator of finite index with closed range of values, $dim N(B)=n,$ $dim N (B^{\star})=m,$ $\nu=n-m<\infty .$  
Oprerator
$$
L_i\biggl( \frac{\partial}{\partial x}\biggr)=\sum_{|k|\leq q_i}a^i_k(x)D^k, \, \, \, i=1,\dots ,q
$$
be a partial differential operator of order $q_i$, $$q_0>q_1>q_2>\dots >q_q.$$

Functions $a^i_k(x):\Omega \in \mathbb{R}
^N \rightarrow \mathbb{R}^1$ and $f(x):\Omega \in \mathbb{R}^N\rightarrow E_2$ are sufficiently smooth. 
 Differential equation
 \begin{equation}
 L_0\bigl (\frac{\partial}{\partial x}\bigr)Bu+L_1\bigl (\frac{\partial}{\partial x}\bigr)A_1u+\dots +L_q\bigl(\frac{\partial}{\partial x}\bigr)A_qu=f(x)
 \label{eq1}
 \end{equation}
is considered below.
\begin{definition}
 Equation  \eqref{eq1} is a regular equation, if the operator $B$ has bounded inverse. In contrary case we shall say that \eqref{eq1} is a singular equation.
 \end{definition}
   The investigation of singular equation \eqref{eq1} with irreversible operator $B$ in the main part is reduced to the regular problems. The special decomposition of the Banach spaces $E_1$ and $E_2$ in accordance with the generalized $B$-Jordan structure of the operator coefficients $A_1,\dots , A_q$ is used. This reduction makes it possible to formulate the boundary value problems for singular equation \eqref{eq1} with natural conditions on special projections of solution.
   
    Methods of this paper are based on the functional approaches from \cite{lit1,lit2, lit3, lit4, lit5} (here readers may also refer to the monographs \cite{lit6, lit7, lit8, lit10, litdsid}  and extensive bibliographical review of papers of Boris Loginov's school \cite{lit9}).
    
\section{ Preliminaries: $P_k, \, \,  Q_k$ - commutability of linear operators in the case of Noetherian operator $B$}

   Let $E_1=M_1\oplus N_1, \, \, E_2=M_2\oplus N_2,$ $P$ - projector on $M_1$ along $N_1 , Q$  is projector on $M_2$ along $N_2$, A is linear closed operator from $E_1$ to $E_2$, $\overline{D}(A)=E_1,$ $A\in\{A_1,\dots,A_q\}.$
    
\begin{definition}
If $u\in D(A),$   $APu\in D(A),$ $Pu=QAu$, then $A$ be $(P,Q)$ -commute. Let $\{\varphi^{(\prime)}_1,\dots,\varphi^{(\prime)}_n\}$  is a basis in $N(B),$  $\{ \psi^{(\prime)}_1,\dots ,\psi^{(\prime)}_m\}$ is a basis in $N(B^{\star})$
\end{definition}      
 Suppose the following condition is satisfied:
 
      1. Noetherian operator $B$ has a complete $A_1$  -Jordan set $\phi_i^{(j)}$, $B^{\star}$, $i=\overline{1,n}$, $j=\overline{1,p_i}$
  has a complete  $A_1^{\star}$ -Jordan set $\psi_i^{(j)}$, $i=\overline{1,m}$, $j=\overline{1,p_i}$
and the systems $\gamma_i^{(j)}\equiv A_i^{\star}\psi_i^{(p_i+1-j)}$, $ z_i^{(j)}\equiv A_1\phi_1^{(p_i+1-j)}$,
 where $ i=\overline{1,l}, j=\overline{1,p_i}$, $l=\min(m,n)$ corresponding to them are biortogonal \cite{lit1}.
 
The projectors
\begin{equation}
P_k=\sum_{i=1}^l
\sum_{j=1}^{p_i}<\cdot, \gamma_i^{(j)}>\phi_i^{(j)} \stackrel{\mathrm{def}}{\equiv}(<\cdot, \gamma>, \Phi),
\label{eq2} 
\end{equation}
\begin{equation}
Q_k=\sum_{i=1}^l
\sum_{j=1}^{p_i}<\cdot, \psi_i^{(j)}>z_i^{(j)}\stackrel{\mathrm{def}}{\equiv}
(<\cdot, \Psi>, Z),
\label{eq3} 
\end{equation}
where $k=p_1+\dots +p_l$ -root number, generate the direct decomposition
$$
E_1=E_{1k}\oplus E_{1\infty -k,} \, \, \, E_2=E_{2k}\oplus E_{2\infty-k}
$$

\begin{corollary}
 The bounded pseudoinverse operator  $B^{+}$ be $(P_k, Q_k)$ -commute, operator $AB^+$ be $(Q_k,Q_k)$- commute, $B^+A$ be $(P_k,P_k)$- commute, $E_{1\infty-k}, E_{1,k}$  -- invariant subspaces of operator $B^+A,$ $E_{2\infty-k}, E_{2k}$ -- invariant subspaces of operator $AB^+$.
 \end{corollary}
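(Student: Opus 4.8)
\emph{Sketch of proof.} The plan is to reduce every assertion to a short formal manipulation with the projectors $P_k$, $Q_k$ and the bounded pseudoinverse $B^{+}$, the only substantive ingredients being structural facts supplied by the theory of complete generalized Jordan sets of a Noetherian operator \cite{lit1}. First I would record that $B$ and the operator coefficient $A$ are $(P_k,Q_k)$-commutable: $BP_k=Q_kB$ on $D(B)$ and $AP_k=Q_kA$ on $D(A)$. For $B$ this follows immediately from the chain relations $B\phi_i^{(1)}=0$, $B\phi_i^{(j)}=A_1\phi_i^{(j-1)}$, $B^{\star}\psi_i^{(1)}=0$, $B^{\star}\psi_i^{(j)}=A_1^{\star}\psi_i^{(j-1)}$ together with the biorthogonality of the systems $\{\gamma_i^{(j)}\}$ and $\{z_i^{(j)}\}$; for $A=A_1$ it is obtained the same way from $A_1\phi_i^{(j)}=z_i^{(p_i+1-j)}$ and $A_1^{\star}\psi_i^{(j)}=\gamma_i^{(p_i+1-j)}$. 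Second I would invoke the defining properties of $B^{+}$: $B^{+}B=I-P_k$ on $D(B)$, $BB^{+}=I-Q_k$ on $E_2$, $P_kB^{+}=0$ (equivalently $R(B^{+})\subseteq E_{1\infty-k}$), and $B^{+}Q_k=0$ (equivalently $B^{+}$ annihilates $E_{2k}$); since moreover $R(B^{+})\subseteq D(B)\subseteq D(A)$ and $P_kE_1\subseteq D(B)\subseteq D(A)$, the compositions $AB^{+}$ and $B^{+}A$ are everywhere defined in the sense required by the definition of $(P,Q)$-commutability.

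Next I would derive the three commutativity claims by pure algebra. From $B^{+}Q_k=0=P_kB^{+}$ one has $B^{+}Q_k=P_kB^{+}$, that is, $B^{+}$ is $(P_k,Q_k)$-commutable. For $AB^{+}$: $AB^{+}Q_k=A(B^{+}Q_k)=0$, while $Q_k(AB^{+})=(Q_kA)B^{+}=(AP_k)B^{+}=A(P_kB^{+})=0$, so $AB^{+}Q_k=Q_kAB^{+}$. For $B^{+}A$, on $D(A)$: $B^{+}AP_k=B^{+}(Q_kA)=(B^{+}Q_k)A=0$ and $P_kB^{+}A=(P_kB^{+})A=0$, so $B^{+}AP_k=P_kB^{+}A$. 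I would then read off the invariance assertions directly: $B^{+}A$ annihilates $E_{1k}=P_kE_1$ and sends $D(A)$ into $R(B^{+})\subseteq E_{1\infty-k}$, hence both $E_{1k}$ and $E_{1\infty-k}$ are invariant under $B^{+}A$; symmetrically $AB^{+}$ annihilates $E_{2k}=Q_kE_2$ and sends $E_2$ into $E_{2\infty-k}$, hence $E_{2k}$ and $E_{2\infty-k}$ are invariant under $AB^{+}$.

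The only genuine obstacle lies in the first step. The computation above uses only $(P_k,Q_k)$-commutability of $B$ and $A$ and the one-sided orthogonality relations $P_kB^{+}=0$, $B^{+}Q_k=0$; verifying the latter for the pseudoinverse constructed from the complete generalized Jordan sets in the truly Noetherian case (where $\nu=n-m$ need not vanish, so $B$ is not Fredholm), and checking that the lower-order coefficients $A_2,\dots,A_q$ are again $(P_k,Q_k)$-commutable relative to the projectors built from the $A_1$-Jordan structure, is carried out within the apparatus of \cite{lit1}. Granting that, the remainder is bookkeeping, the one point requiring care being to respect the domain $D(A)$ when composing the unbounded operator $A$ with $B^{+}$.
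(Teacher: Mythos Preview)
There is a genuine gap. Your argument rests on the identities $P_kB^{+}=0$, $B^{+}Q_k=0$ (and the companion claims $B^{+}B=I-P_k$, $BB^{+}=I-Q_k$), but these fail whenever any Jordan chain has length $p_i>1$. The projectors $P_k$, $Q_k$ of (\ref{eq2}), (\ref{eq3}) project onto the spans of \emph{all} chain elements $\phi_i^{(j)}$, $z_i^{(j)}$, $j=1,\dots,p_i$, not merely onto $N(B)$ and a complement of $R(B)$; the standard pseudoinverse satisfies $B^{+}B=I-P_n$, $BB^{+}=I-Q_m$ with the \emph{smaller} projectors of ranks $n$ and $m$. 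Concretely, for $j\ge 2$ one has $z_i^{(j)}=A_1\phi_i^{(p_i+1-j)}=B\phi_i^{(p_i+2-j)}\in R(B)$, whence $B^{+}z_i^{(j)}=\phi_i^{(p_i+2-j)}\ne 0$; the paper itself records that $B^{+}:E_{2k}\to E_{1,k+n-m}$, not $B^{+}:E_{2k}\to 0$. Consequently your derivations collapse: $AB^{+}Q_k$ and $Q_kAB^{+}$ are not zero, and $B^{+}A$ does not annihilate $E_{1k}$ --- on the contrary, it acts on the chains as the nilpotent shift $\phi_i^{(j)}\mapsto\phi_i^{(j+1)}$ (with $\phi_i^{(p_i)}\mapsto 0$). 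The invariance of $E_{1k}$ under $B^{+}A$ is precisely the statement that this shift stays inside the chain, which is nontrivial and cannot be obtained by claiming the operator vanishes there.

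The repair is close to what you wrote but without the shortcut: one must verify $B^{+}Q_k=P_kB^{+}$ directly (neither side zero), by checking it on the basis $\{z_i^{(j)}\}$ of $E_{2k}$ via the chain relations and separately on $E_{2\infty-k}$. Once that holds together with $AP_k=Q_kA$, the rest is the algebra you indicate: $AB^{+}Q_k=AP_kB^{+}=Q_kAB^{+}$, $B^{+}AP_k=B^{+}Q_kA=P_kB^{+}A$, and invariance follows from commutation with the projector. The paper itself does not spell out a proof of this corollary; it is stated and the detailed verification is deferred to the preprint \cite{lit4} (see also \cite{lit1}), so there is no in-paper argument to compare against --- but the particular identities you invoke are not available.
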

 
 Suppose the operator $A$ be $(P_k,Q_k)$ - commute, where $P_k, Q_k$ are defined by formulas (\ref{eq2}), (\ref{eq3}). Then there is a matrix $\mathcal{A}$, such that $A\Phi=\mathcal{A}Z,$ $A^{\star} \Psi=\mathcal{A}^{\prime}\gamma$. This matrix is called the matrix of $(P_k,Q_k)$ -commutability.
\begin{corollary} 
 The operators $B$ and $A$ be $(P_k,Q_k)$-commute and the matrices of $(P_k,Q_k)$-commutability are the symmetrical cell-diagonal matrices:
$$
\mathcal{A}_B=diag(B_1,\cdots , B_l)  \, \, \, \mathcal{A}_A=diag(\mathcal{A}_1, \cdots , \mathcal{A}_l),
$$   
    where
   $$ B_i=\left[\begin{array}{ccccc}
0&.&.&.&0\\
0&.&.&.&1\\
.&.&.&.&.\\
0&1&.&.&0
\end{array}\right] \, \, \, \, \mathcal{A}_i=\left[\begin{array}{ccc}
0&\cdots&1\\
&\cdots&\\
1&\cdots&0
\end{array}\right] $$ 
 The detailed proof see in preprint \cite{lit4} .
 \end{corollary}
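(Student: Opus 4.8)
Throughout, $A$ denotes the operator $A_1$ with respect to which the complete $B$-Jordan set was chosen. Collect $\Phi=\{\phi_i^{(j)}\}$, $\Psi=\{\psi_i^{(j)}\}$, $Z=\{z_i^{(j)}\}$, $\gamma=\{\gamma_i^{(j)}\}$ ($i=\overline{1,l}$, $j=\overline{1,p_i}$) into column vectors ordered first by $i$ and then by $j$, so that $\mathcal{A}_B$ and $\mathcal{A}_A$ are the $k\times k$ matrices defined by $B\Phi=\mathcal{A}_BZ$ and $A\Phi=\mathcal{A}_AZ$. The plan is to read every assertion off the defining relations of the two Jordan sets, keeping track of the two index conventions simultaneously in play: the \emph{forward} chain relations $B\phi_i^{(1)}=0$, $B\phi_i^{(j)}=A\phi_i^{(j-1)}$ for $j=\overline{2,p_i}$, dually $B^{\star}\psi_i^{(1)}=0$, $B^{\star}\psi_i^{(j)}=A^{\star}\psi_i^{(j-1)}$, and the \emph{reversed} labellings $z_i^{(j)}=A\phi_i^{(p_i+1-j)}$, $\gamma_i^{(j)}=A^{\star}\psi_i^{(p_i+1-j)}$.

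First I would extract the two matrices. The identity $z_i^{(s)}=A\phi_i^{(p_i+1-s)}$ reads $A\phi_i^{(j)}=z_i^{(p_i+1-j)}$, so in the $i$-th block the matrix of $A$ carries a single $1$ in position $(j,p_i+1-j)$, which is the counter-identity $\mathcal{A}_i$. Feeding this into the chain relation gives $B\phi_i^{(j)}=A\phi_i^{(j-1)}=z_i^{(p_i+2-j)}$ for $j=\overline{2,p_i}$, while $B\phi_i^{(1)}=0$; hence the $i$-th block of $B$ has a single $1$ in position $(j,p_i+2-j)$ for $j\geq2$ and a zero first row, which is the matrix $B_i$ of the statement. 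Since $B\phi_i^{(j)}$ and $A\phi_i^{(j)}$ involve only the vectors $z_i^{(\cdot)}$ (never $z_s^{(\cdot)}$ with $s\neq i$), both $\mathcal{A}_A=\mathrm{diag}(\mathcal{A}_1,\dots,\mathcal{A}_l)$ and $\mathcal{A}_B=\mathrm{diag}(B_1,\dots,B_l)$ are cell-diagonal, and each block $\mathcal{A}_i$, $B_i$ is a symmetric matrix.

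Next I would check the $(P_k,Q_k)$-commutability, i.e. $AP_ku=Q_kAu$ and $BP_ku=Q_kBu$. Inserting \eqref{eq2} and using $B\phi_i^{(1)}=0$ together with the relation just obtained,
$$BP_ku=\sum_{i=1}^{l}\sum_{j=2}^{p_i}<u,\gamma_i^{(j)}>z_i^{(p_i+2-j)};$$
inserting \eqref{eq3}, using $<Bu,\psi_i^{(j)}>=<u,B^{\star}\psi_i^{(j)}>$ with $B^{\star}\psi_i^{(1)}=0$ and $B^{\star}\psi_i^{(j)}=A^{\star}\psi_i^{(j-1)}=\gamma_i^{(p_i+2-j)}$ for $j\geq2$,
$$Q_kBu=\sum_{i=1}^{l}\sum_{j=2}^{p_i}<u,\gamma_i^{(p_i+2-j)}>z_i^{(j)};$$
the substitution $j\mapsto p_i+2-j$ carried out within each block maps the first sum onto the second, so $BP_k=Q_kB$. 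The identity $AP_k=Q_kA$ follows in the same way from $A\phi_i^{(j)}=z_i^{(p_i+1-j)}$ and $A^{\star}\psi_i^{(j)}=\gamma_i^{(p_i+1-j)}$, now with the flip $j\mapsto p_i+1-j$. Finally, the identical computation on the dual chain relations gives $B^{\star}\Psi=\mathcal{A}_B^{\prime}\gamma$ and $A^{\star}\Psi=\mathcal{A}_A^{\prime}\gamma$ with the same blocks transposed; since $\mathcal{A}_i^{\prime}=\mathcal{A}_i$ and $B_i^{\prime}=B_i$, the primed and unprimed matrices coincide, which is what the word \emph{symmetrical} in the statement records.

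I do not anticipate a genuine obstacle: the argument is a finite linear-algebra verification. The one place requiring care is the coexistence of the forward chain index ($B\phi_i^{(j)}=A\phi_i^{(j-1)}$) with the reversed index hard-wired into $z_i^{(j)}$ and $\gamma_i^{(j)}$: every term $A\phi_i^{(s)}$ (respectively $A^{\star}\psi_i^{(s)}$) must first be rewritten as the appropriate $z_i^{(\cdot)}$ (respectively $\gamma_i^{(\cdot)}$) before coefficients can be compared, and the reindexings $j\mapsto p_i+1-j$ and $j\mapsto p_i+2-j$ must be performed blockwise. It is also worth recording that the simple block form is tied to the choice $A=A_1$: for another coefficient $A_s$ with $s\neq1$ there is no reason for $A_s\phi_i^{(j)}$ to lie in $\mathrm{span}\{z_i^{(\cdot)}\}$, so neither the $(P_k,Q_k)$-commutability nor the cell-diagonal structure survives without additional hypotheses.
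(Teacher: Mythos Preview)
Your proof is correct: the matrices drop out of the chain relations $B\phi_i^{(j)}=A\phi_i^{(j-1)}$ combined with the reversed labelling $z_i^{(j)}=A\phi_i^{(p_i+1-j)}$, and the commutability identities $BP_k=Q_kB$, $AP_k=Q_kA$ follow by the blockwise reindexing you describe. The only point worth double-checking is that the definition of $(P,Q)$-commutability in the paper is phrased as $APu=QAu$ (not $PAu=AQu$), which is exactly what you verify.

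As for comparison: the paper does not prove this corollary at all---it simply writes ``The detailed proof see in preprint \cite{lit4}'' and moves on. So there is nothing in the present paper to compare your argument against. Your direct verification from the Jordan-chain axioms and the biorthogonality is the natural route and is almost certainly what the cited preprint does; in any case it is self-contained and complete, which is more than the paper offers here. Your closing caveat that the cell-diagonal structure is tied to the choice $A=A_1$ and need not persist for the other coefficients $A_s$ is also apt: the paper handles those separately via the additional hypothesis labelled condition~2.
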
   
   
\begin{definition}   
The operator $G$ be $(P_k,Q_k)$-commute quasitriangular, if $\mathcal{A}_G$ is upper quasitriangular matrix, whose diagonal blocks $\mathcal{A}_{ii}$ of dimension $p_i\times p_i$ are lower right triangular matrices.
\end{definition}   
  
\section{The reduction of the equation (\ref{eq1}) to the regular PDE}
Suppose:

2. The operators $A_2,\cdots ,A_q$ be $(P_k,Q_k)$-commute. Then there are matrices $\mathcal{A}_i$ $i=\overline{1,q},$ such that          
 $$
      A_i\Phi=\mathcal{A}_iZ, \, \, \, A_i^{\star}\Psi=\mathcal{A}_i^{\prime}\gamma
 $$ 
  and also $\mathcal{A}_i=(\mathcal{A}_{11},\cdots , \mathcal{A}_{l1})$  is a cell-diagonal matrix ,where              
$$\mathcal{A}_{i1}=\left[\begin{array}{ccc}
0&\cdots&1\\
&\cdots&\\
1&\cdots&0
\end{array}\right], \, \, \, i=\overline{1,l} .$$       
      
 Let us consider the case $m\leq n.$
   
 We introduce the projections $P_k,Q_k$ following formulas  (\ref{eq2}), (\ref{eq3}) and projector
$$      
P_{n-m}=\sum_{i=m+1}^n<\cdot, \gamma_i>\phi_i      
$$  
 generating the direct decompositions
 $$
 E_1=E_{1k}\oplus span\{\phi_{m+1},\cdots , \phi_n\}\oplus E_{1\infty-(k+n-m)},E_2=E_{2k}\oplus E_{2\infty-k}.
 $$     
 Note that $B^+:E_{2\infty-k}\rightarrow E_{1\infty-(k+n-m)}\subset E_{1\infty-k}$, $B^+:E_{2k}\rightarrow E_{1k+n-m}.$
            
We shall seek the solution of the equation (\ref{eq1}) in the following form
\begin{equation}
u(x)=B^+v(x)+(C(x),\Phi)+\sum_{i=m+1}^n\lambda_i(x)\phi_i,
\label{eq4}
\end{equation}      
where $B^+$ is a bounded pseudoinverse operator for $B,$ $v\in E_{2\infty-k},$ $C(x)=(C_1(x),\cdots ,C_m(x))^{\prime},$ $C_i(x)=(C_{i1}(x),\cdots , C_{ip}(x)),$ $\Phi=(\Phi_1,\cdots,\Phi_m)^{\prime},$ $\Phi_i=(\phi_i^1,\cdots , \phi_i^{(p_i)}),$ $i=\overline{1,m}.$

Substituting the expression (\ref{eq4}) into the equation (\ref{eq1}) and noting that $BB^+v=v,$ because $v\in E_{2\infty-k}\subset E_{2\infty-k}\subset E_{2\infty-m}$ we obtain
    
 \begin{equation}
 L_0(\frac{\partial}{\partial x})v+\sum_{i=1}^qL_i(\frac{\partial}{\partial x})A_iB^+v+L_0(\frac{\partial}{\partial x})B(C, \Phi)+\sum_{i=1}^qL_i(\frac{\partial}{\partial x})A_i(C,\Phi)+
 \label{eq5}
 \end{equation}   
$$
+\sum_{j=1}^q\sum_{i=m+1}^nL_j(\frac{\partial}{\partial x})A_j\phi_i\lambda_i(x)=f(x).
$$    
    
 The operator $B^+$ be $(Q_k,P_k)$-  commute, so from the condition 2 and corollary 1 it follows that $Q_kA_iB^+(I-Q_k)=0,$ $(I-Q_k)A_iB^+Q_k=0.$ Hence, $Q_kA_iB^+v=0,$ $\forall v \in E_{2\infty-k}.$
According to the corollary 2 $B\Phi=\mathcal{A}_BZ,$ where $\mathcal{A}_B=(B_1,\cdots,B_m)$ - symmetrical  cell-diagonal matrix. Consequently,
\begin{equation}
(I-Q_k)B\Phi=0, \, \, \, (I-Q_k)A_i\Phi=0, \, \, \, i=\overline{1,q},
\label{eq6}
\end{equation}
because $(I-Q_k)Z=0.$ The following equalities are fulfilled:
\begin{equation}
(A_i(C,\Phi),\Psi)\stackrel{\mathrm{def}}{\equiv} A_i^{\prime}C, \, \, \, (B(C,\Phi), \Psi)\stackrel{\mathrm{def}}{\equiv} \mathcal{A}_BC.
\label{eq7}
\end{equation}   
   
Projecting the equation (\ref{eq5}) onto $E_{2\infty-k}$ by virtue of (\ref{eq6})  we obtain the regular PDE
  \begin{equation}
  \tilde{\mathcal{L}}v=(I-Q_k)f(x)-\sum_{j=1}^q\sum_{i=m+1}^nL_j(\frac{\partial}{\partial x})A_j\phi_i\lambda_i(x),
  \label{eq8}
  \end{equation} 
where
 \begin{equation}
  \tilde{\mathcal{L}}=L_0(\frac{\partial}{\partial x})+\sum_{i=1}^qL_i(\frac{\partial}{\partial x})A_iB^+.
  \label{eq9}
  \end{equation}    
   is the regular differential operator with order $q_0$.
   In order to determine the vector-function $C(x):R^N\rightarrow R^k$, we project the equation (\ref{eq5}) onto $E_{2k}$ and by virtue (\ref{eq7}) we obtain PDE-system
\begin{equation}
 L_0(\frac{\partial}{\partial x})\mathcal{A}_BC+\sum_{i=1}^qL_i(\frac{\partial}{\partial x})\mathcal{A}_i^{\prime}C=<f(x)-\sum_{j=2}^q\sum_{i=m+1}^nL_j(\frac{\partial}{\partial x})A_j\phi_i\lambda_i(x),\Psi>.
  \label{eq10}
  \end{equation}    
So it is proved 
\begin{theorem}
Suppose conditions 1 and 2 are satisfied, $m\leq n, \, \, \, f:\Omega \subset R^N\rightarrow E_2$- sufficiently smooth function. Then any solution of equation (\ref{eq1}) can be represented in the form
$$
u=B^+v+(C,\Phi)+\sum_{i=m+1}^n\lambda_i\phi_i,
$$   
where $v$ satisfies the regular equation (\ref{eq8}), and the vector $C(x)$ is defined from the system (\ref{eq10}). The functions $\lambda_i(x),$ $i=\overline{m+1,n}$ remain an arbitrary functions.
\end{theorem}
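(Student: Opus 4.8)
The plan is to read the statement as a reduction theorem with two essentially independent parts — the \emph{representation} \eqref{eq4} of an arbitrary solution, and the \emph{component equations} \eqref{eq8}, \eqref{eq10} governing the pieces of that representation — and to establish them in that order, the second part being already carried out almost verbatim in the computation preceding the statement.

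For the representation, the first step is to record that, under condition~1, the projectors $P_k$, $Q_k$ of \eqref{eq2}, \eqref{eq3} together with $P_{n-m}=\sum_{i=m+1}^{n}\langle\cdot,\gamma_i\rangle\phi_i$ produce the direct decompositions
$$
E_1=E_{1k}\oplus\mathrm{span}\{\phi_{m+1},\dots,\phi_n\}\oplus E_{1\infty-(k+n-m)},\qquad E_2=E_{2k}\oplus E_{2\infty-k},
$$
a consequence of the biorthogonality of $\{\gamma_i^{(j)}\}$, $\{z_i^{(j)}\}$ and the completeness of the $A_1$- and $A_1^{\star}$-Jordan sets (cf. \cite{lit1,lit4}). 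The second step: given any sufficiently smooth solution $u$ of \eqref{eq1}, split $u(x)$ pointwise along this decomposition of $E_1$, put $(C(x),\Phi):=P_k u(x)$ (so $C(x)$ is the coordinate row of $P_k u(x)$ in the basis $\{\phi_i^{(j)}\}$) and $\sum_{i=m+1}^{n}\lambda_i(x)\phi_i:=P_{n-m}u(x)$, and observe that the remainder lies in $E_{1\infty-(k+n-m)}$. The third step identifies this remainder with $B^+v$: using $B^+B=I-P_k-P_{n-m}$ together with the $(P_k,Q_k)$-commutability of $B$ (Corollary~2) and of $B^+$ (Corollary~1), one checks that $v(x):=(I-Q_k)Bu(x)=B\bigl(u(x)-P_ku(x)\bigr)$ lies in $E_{2\infty-k}$ and satisfies $B^+v(x)=u(x)-P_ku(x)-P_{n-m}u(x)$; this is precisely \eqref{eq4}, and the smoothness of $C$, $\lambda_i$, $v$ is inherited from that of $u$.

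For the component equations, substitute \eqref{eq4} into \eqref{eq1}; since $v\in E_{2\infty-k}\subset E_{2\infty-m}$ one has $BB^+v=v$, which gives \eqref{eq5}. Now apply the two complementary projectors. Under $I-Q_k$: Corollary~1 yields $Q_kA_iB^+v=0$; Corollary~2 gives $B\Phi=\mathcal{A}_BZ$ and condition~2 gives $A_i\Phi=\mathcal{A}_iZ$, so $(I-Q_k)B(C,\Phi)=0$ and $(I-Q_k)A_i(C,\Phi)=0$ because $(I-Q_k)Z=0$, whereas the terms coming from $\sum_{i=m+1}^{n}\lambda_i\phi_i$ do \emph{not} vanish and remain as forcing; collecting these facts turns \eqref{eq5} into \eqref{eq8} with the operator \eqref{eq9}, whose order is $q_0$ since $q_0>q_1>\dots>q_q$ and the $A_iB^+$ are bounded. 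Under $Q_k$ — equivalently, pairing with $\Psi$ and invoking the identities \eqref{eq7} — the terms $B(C,\Phi)$, $A_i(C,\Phi)$ become $\mathcal{A}_BC$, $\mathcal{A}_i^{\prime}C$ while $Q_kA_iB^+v=0$, and one obtains \eqref{eq10}. Finally, neither projected equation imposes any condition on $\lambda_{m+1},\dots,\lambda_n$ — they enter only through the prescribed right-hand sides — so these functions are genuinely arbitrary, which completes the proof.

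The routine portion is the projector bookkeeping of the third paragraph, already present in the text. The genuine obstacle is the third step of the second paragraph: verifying that $B^+$ restricted to $E_{2\infty-k}$ is onto \emph{all} of $E_{1\infty-(k+n-m)}$ and that $v=(I-Q_k)Bu$ really lands in $E_{2\infty-k}$ with $B^+v$ reconstructing $u-P_ku-P_{n-m}u$ exactly — in other words, correctly bookkeeping the extra kernel directions $\phi_{m+1},\dots,\phi_n$ that appear precisely because $n>m$. This hinges on the precise normalization of $B^+$ and of the projectors $P_k,P_{n-m}$ relative to the decomposition induced by the complete generalized Jordan set; once that is pinned down, everything else is a direct projection argument.
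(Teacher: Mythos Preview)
Your proof is correct and follows the paper's approach: the derivation of \eqref{eq8} and \eqref{eq10} by substituting \eqref{eq4} into \eqref{eq1} and projecting with $I-Q_k$ and $Q_k$ is exactly the argument the paper gives in the text preceding the theorem (which ends with ``So it is proved''). You go beyond the paper by explicitly justifying that every solution admits the representation \eqref{eq4} --- the paper simply ``seeks the solution in the form \eqref{eq4}'' without further comment --- and you rightly flag that the identity you need there (essentially $B^+B=I$ on $E_{1\infty-(k+n-m)}$, which is what the paper's line $B^+:E_{2\infty-k}\to E_{1\infty-(k+n-m)}$ together with $BB^+v=v$ encodes) depends on the normalization of $B^+$ relative to the Jordan decomposition.
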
   
The theorem 1 admits generalizations. Suppose the operators $A_2(x),\cdots,A_q(x)$ with the domains independent from $x$, are subject to the operators $B$ and for any $x\in\Omega$ satisfy to the condition 2. Then the theorem 1 remain valid.  
       
Let us consider system (\ref{eq10}) with unknown vecto-function $C(x)$.    
\begin{lemma}
 Suppose conditions 1,2 are satisfied, operators $A_i,$ $i=\overline{1,q},$ $(P_k,Q_k)$ - commute quasitriangular. Then  the system (\ref{eq10}) is a recurrent sequence of linear differential equations of order $q_1$ with the  regular differential operators of the form
$$
\tilde{\mathcal{L}}_{ks}=L_1(\frac{\partial}{\partial x})+\sum_{i=2}^qa_{pk-s+1,s}^{ik}L_i(\frac{\partial}{\partial x}).
$$              
In particular, if condition 1 is satisfied and $A_2=\cdots=A_q=0,$ system (\ref{eq10}) takes the form
$$
L_1(\frac{\partial}{\partial x})C_{ip_i}(x)=<f(x),\psi_1^{(1)}>,
$$                 
$$
L_1(\frac{\partial}{\partial x})C_{ip_i-s}(x)=<f(x),\psi_i^{(s+1)}>-L_0(\frac{\partial}{\partial x})C_{ip_i-s+1}(x),\, \, \, s=\overline{1,p_i-1}, \, \, \, i=\overline{1,m}.
$$                 
\end{lemma}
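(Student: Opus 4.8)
The plan is to examine the structure of system \eqref{eq10} under the quasitriangularity hypothesis and show that the cell-diagonal structure of $\mathcal{A}_B$ together with the upper-quasitriangular structure of the matrices $\mathcal{A}_i'$ decouples the vector equation into a triangular cascade. First I would write out \eqref{eq10} componentwise in the blocks indexed by $i=\overline{1,m}$: since $\mathcal{A}_B=\mathrm{diag}(B_1,\dots,B_m)$ with each $B_i$ the antidiagonal permutation block appearing in Corollary~2, the term $L_0(\partial/\partial x)\mathcal{A}_B C$ acts within the $i$-th block and, crucially, $B_i$ maps $C_{ip_i-s+1}$ to the slot tested against $\psi_i^{(s)}$ — i.e. it shifts the index by one. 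This is what produces the ``$L_0(\partial/\partial x)C_{ip_i-s+1}(x)$'' correction term on the right-hand side of the recurrence and why $L_1$ (not $L_0$) becomes the leading operator of each scalar equation.

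Next I would incorporate the operators $A_2,\dots,A_q$. By Condition~2 each $A_i\Phi=\mathcal{A}_i Z$ with $\mathcal{A}_i$ cell-diagonal, and by the quasitriangularity hypothesis (Definition~4) each diagonal block $\mathcal{A}_{ii}$ of $\mathcal{A}_{A_i}$ is a lower-right triangular $p_i\times p_i$ matrix. Forming $A_i'C$ via \eqref{eq7} and collecting the contribution of $L_i(\partial/\partial x)$ for $i=\overline{2,q}$, the lower-right triangular structure guarantees that the equation for $C_{ip_i-s}$ involves only the already-determined components $C_{ip_i},\dots,C_{ip_i-s+1}$ plus the new unknown $C_{ip_i-s}$, the latter entering only through the single antidiagonal entry. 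Denoting that entry $a^{ik}_{p_ik-s+1,s}$ (the coefficient that multiplies the ``new'' unknown), one reads off the stated regular operator $\tilde{\mathcal{L}}_{ks}=L_1(\partial/\partial x)+\sum_{i=2}^q a^{ik}_{p_ik-s+1,s}L_i(\partial/\partial x)$; regularity is immediate since $L_1$ has order $q_1>q_2>\dots>q_q$, so $L_1$ dominates and the operator is a genuine differential operator of order $q_1$ with leading symbol that of $L_1$.

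Then I would close the induction: for $s=0$ (the top slot $C_{ip_i}$) the right-hand side of \eqref{eq10} tested against $\psi_i^{(1)}$ contains no lower-index $C$ and no $L_0$ correction, giving the base case $L_1(\partial/\partial x)C_{ip_i}(x)=\langle f(x),\psi_i^{(1)}\rangle$ (when the $A_j\lambda_j$ terms are absorbed); for $s=\overline{1,p_i-1}$ the inductive step produces $\tilde{\mathcal{L}}_{ks}C_{ip_i-s}=\langle f,\psi_i^{(s+1)}\rangle - L_0(\partial/\partial x)C_{ip_i-s+1}-(\text{known lower-order }C\text{-terms from }A_2,\dots,A_q)$. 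Specializing $A_2=\dots=A_q=0$ kills every $\tilde{\mathcal{L}}_{ks}$ down to $L_1(\partial/\partial x)$ and removes the extra known terms, yielding exactly the two displayed recurrences.

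The main obstacle I anticipate is bookkeeping rather than conceptual: correctly tracking how the antidiagonal block $B_i$ and the antidiagonal $\mathcal{A}_{i1}$ permute indices when one passes from the abstract pairing \eqref{eq7} to scalar equations, so that the index shift ``$p_i-s \mapsto p_i-s+1$'' and the identification of the coefficient $a^{ik}_{p_ik-s+1,s}$ come out consistently. One must also be careful that the $\lambda_j$ terms with $j=\overline{m+1,n}$ enter only the right-hand sides (they are data, by Theorem~1), so they do not disturb the triangular cascade. Once the index conventions are fixed, the recurrence and the regularity claim follow directly from $q_1>q_2>\dots>q_q$.
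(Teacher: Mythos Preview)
The paper does not actually supply a proof of this lemma: it is stated and immediately followed by Corollary~3, whose one-line justification (``Proof obviously, because in this case $L_i(\partial/\partial x)=1$ and $A_2=\cdots=A_q=0$'') is the only argument in the vicinity, and that pertains to the corollary, not to the lemma. So there is no paper proof to compare against.

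Your plan is the natural one and is correct. The block-diagonal form $\mathcal{A}_B=\mathrm{diag}(B_1,\dots,B_m)$ from Corollary~2 decouples \eqref{eq10} into $m$ independent $p_i\times p_i$ subsystems; within the $i$-th block the antidiagonal $\mathcal{A}_{i1}$ for $A_1$ puts $L_1(\partial/\partial x)C_{ip_i-s}$ on the diagonal of the $s$-th scalar equation, while the sub-antidiagonal nilpotent $B_i$ contributes only the already-known $L_0(\partial/\partial x)C_{ip_i-s+1}$. The quasitriangular hypothesis on $A_2,\dots,A_q$ (Definition~3) then guarantees that their contributions to the $s$-th equation involve only $C_{ip_i},\dots,C_{ip_i-s}$, with the new unknown $C_{ip_i-s}$ multiplied by the single antidiagonal entry you denote $a^{ik}_{p_k-s+1,s}$. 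Regularity of $\tilde{\mathcal{L}}_{ks}$ follows from $q_1>q_2>\dots>q_q$, exactly as you say. Your remark that the $\lambda_i$ terms are data and sit on the right-hand side is also correct; in the special case $A_2=\cdots=A_q=0$ the sum $\sum_{j=2}^q$ in \eqref{eq10} is empty, so they disappear and one recovers the two displayed recurrences verbatim. The only hazard is, as you anticipate, purely notational: keeping the index shift induced by $z_i^{(j)}=A_1\phi_i^{(p_i+1-j)}$ consistent when passing from \eqref{eq7} to scalar components.
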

              
\begin{corollary}                 
 Let equation (\ref{eq1}) has the form      
$$
L_0(\frac{\partial}{\partial x})Bu+A_1u=f(x)
$$                 
 and condition 1 is satisfied.  Then  the vector $c(x)$  is defiend by simple recursion. 
\end{corollary}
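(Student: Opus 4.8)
The plan is to deduce the statement from the Lemma by a direct specialization, so that the proof reduces to checking that all hypotheses of the Lemma are in force and then carrying out the substitution $L_1(\frac{\partial}{\partial x})=I$. First I would record the data: here $q=1$, the operators $A_2,\dots ,A_q$ are absent, and the equation $L_0(\frac{\partial}{\partial x})Bu+A_1u=f(x)$ corresponds to $L_1(\frac{\partial}{\partial x})=I$, i.e. $q_1=0$ (consistent with the ordering $q_0>q_1$ as soon as $q_0\ge 1$). Condition 1 by itself supplies the complete $A_1$- and $A_1^{\star}$-Jordan sets and the biorthogonality, hence the projectors $P_k,Q_k$ of \eqref{eq2}, \eqref{eq3}, the root number $k=p_1+\dots +p_l$, and the decompositions $E_1=E_{1k}\oplus E_{1\infty-k}$, $E_2=E_{2k}\oplus E_{2\infty-k}$; condition 2 is vacuous.

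Next I would verify the remaining hypothesis of the Lemma, namely that $A_1$ be $(P_k,Q_k)$-commute quasitriangular. By Corollary 2 the operators $B$ and $A_1$ are $(P_k,Q_k)$-commute and the $(P_k,Q_k)$-commutability matrix of $A_1$ is the cell-diagonal matrix $\mathcal{A}_{A_1}=\mathrm{diag}(\mathcal{A}_1,\dots ,\mathcal{A}_l)$ whose blocks $\mathcal{A}_i$ of size $p_i\times p_i$ are the secondary-diagonal (exchange) matrices. A cell-diagonal matrix is in particular upper quasitriangular, and each exchange block is a lower right triangular matrix — the extreme case, with nonzero entries only on the secondary diagonal — so the requirement in the definition of a $(P_k,Q_k)$-commute quasitriangular operator is met, and the Lemma applies with $q=1$.

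Finally I would read off the conclusion from the particular case of the Lemma: with $A_2=\dots =A_q=0$, system \eqref{eq10} becomes the chain
$$
L_1(\tfrac{\partial}{\partial x})C_{ip_i}(x)=\langle f(x),\psi_i^{(1)}\rangle,\qquad L_1(\tfrac{\partial}{\partial x})C_{i,p_i-s}(x)=\langle f(x),\psi_i^{(s+1)}\rangle-L_0(\tfrac{\partial}{\partial x})C_{i,p_i-s+1}(x),
$$
$s=\overline{1,p_i-1}$, $i=\overline{1,m}$. Substituting $L_1(\frac{\partial}{\partial x})=I$ turns every left-hand side into the unknown component itself, so no differential equation has to be integrated: one sets $C_{ip_i}=\langle f,\psi_i^{(1)}\rangle$ and then obtains $C_{i,p_i-1},\dots ,C_{i1}$ one after another by pairing $f$ with $\psi_i^{(s+1)}$ and subtracting $L_0(\frac{\partial}{\partial x})$ applied to the component found at the previous step. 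This is exactly the asserted simple recursion for $c(x)=C(x)$, and it shows incidentally that $C_{i,p_i-s}$ costs $sq_0$ derivatives of $f$. The argument is short once the framework of the preceding sections is in place; the only point deserving a moment's attention is the verification above that the single-operator situation automatically falls under the quasitriangularity hypothesis of the Lemma, together with keeping track of the order convention $q_1=0<q_0$.
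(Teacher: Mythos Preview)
Your proposal is correct and follows the same route as the paper: the paper's proof consists of the single sentence that in this case $L_1(\frac{\partial}{\partial x})=1$ and $A_2=\cdots=A_q=0$, so the recursion of the Lemma collapses to an explicit formula. You have simply been more careful than the paper in checking that the quasitriangularity hypothesis of the Lemma is automatically satisfied for $A_1$ via Corollary~2, which is a valid and welcome clarification.
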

\begin{proof}
 Proof obviously, because in this case in equation (\ref{eq1}) $L_i(\frac{\partial}{\partial x})=1$ and $A_2=\cdots =A_q=0.$

 Let us consider the second case $m>n$.
  
In this case we use the direct decompositions:
$$                   
 E= E_{1k}\oplus E_{1\infty-k}, E_2=E_{2k}\oplus span( z_{n+1},\cdots, z_m )\oplus E_{2\infty-(k+m-n)}               
$$
 and also 
 $B^+:E_{2\infty-(k+m-n)}\rightarrow E_{1\infty-k},B^+:E_{2k+m-n}\rightarrow E_{1k}.$                                                                   
 We shall seek the solution of equation (\ref{eq1}) in the following form
 \begin{equation}
 u(x)=B^+v(x)+(C(x), \Phi),
 \label{eq11}
 \end{equation}                  
 where $v\in E_{2\infty k},C(x)=(C_1(x),\cdots,C_n(x))^{\prime},$ $C_i(x)=(C_{i1}(x), \cdots,C{ip_i}(x),)$
  $$
  \Phi=(\Phi_1,\cdots, \Phi_n)^{\prime}, \, \, \, \Phi_i=(\phi_i^{(1)},\cdots,\phi_i^{(p_i)}), \, \, \, i=\overline{1,n}.
  $$                 
 Substituting (\ref{eq11}) in the equation (\ref{eq1}) we obtain
\begin{equation}     
L_0(\frac{\partial}{\partial x})(I-Q_{m-n})v+\sum_{i=1}^qL_i(\frac{\partial}{\partial x})A_iB^+v+L_0(\frac{\partial}{\partial x})B(C,\Phi)+              
 \label{eq12}
\end{equation}         
$$
+\sum_{i=1}^qL_i(\frac{\partial}{\partial x})A_i(C,\Phi)=f(x)
$$         
                   
with the condition $<v,\psi_i>=0, \, \, \, i=\overline{1,n}, $
 Let the condition 2 is satisfied. Projecting the equation $(\ref{eq12})$  onto the subspaces $E_{2k},\, \, \, E_{2m-n}, \, \, \, E_{2\infty-(k+m-n)}$
 we obtain
\begin{equation}
 L_0(\frac{\partial}{\partial x})\mathcal{A}_BC+\sum_{i=1}^qL_i(\frac{\partial}{\partial x})\mathcal{A}_i^{\prime}C=<f(x),\psi>,            
\label{eq13}
\end{equation} 
\begin{equation}
 \sum_{i=1}^qL_i(\frac{\partial}{\partial x})Q_{m-n}A_iB^+v=Q_{m-n}f(x),            
\label{eq14}
\end{equation}              
\begin{equation}
 L_0(\frac{\partial}{\partial x})(I-Q_{m-n})v+\sum_{i=1}^qL_i(\frac{\partial}{\partial x})Q_{m-n}A_iB^+v=(I-Q_k-Q_{m=n})f(x),            
\label{eq15}
\end{equation}              
 where $v=E_{2\infty-k}, \Psi=(\psi_1,\cdots,\psi_n)^{\prime}, \psi_i \stackrel{\mathrm{def}}{\equiv} (\psi_i^{(1)},\cdots , \psi_i^{(p_i)}).$
The element $v$ can be find from the regular equation
\begin{equation}
\tilde{\mathcal{ L}}v=(I-Q_k)f        
\label{eq16}
\end{equation}               
in the subspace $E_{2\infty-k}\cap E_{2\infty-(m-n)}$. Indeed, if $Q_{m-n}v=0$, then by virtue $Q_{m-n}Q_k=0$ the solution $v$  of the equation $\ref{eq16}$  satisfies to equations (\ref{eq14}), (\ref{eq15}).       
\end{proof}
In this condition we obtain the following result
\begin{theorem}
Let  $n<m$ , conditions  1, 2 are satisfied $f:\Omega\subset R^N\rightarrow E_2$ be sufficiently smooth function. Then any solution of the equation (\ref{eq1}) can be represented in the form (\ref{eq11}), where $v\in E_{2\infty-k}\cap E_{2\infty-(m-n)}$  is the solution of equation (\ref{eq16}) , vector $C$ is defined from the system (\ref{eq13}).         
\end{theorem}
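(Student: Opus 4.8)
The plan is to follow the same reduction scheme as in Theorem~1, but now the space $E_2$ must be split into \emph{three} summands, $E_2=E_{2k}\oplus\mathrm{span}\{z_{n+1},\dots,z_m\}\oplus E_{2\infty-(k+m-n)}$, since $\dim N(B^\star)=m$ now exceeds $\dim N(B)=n$. First I would insert the ansatz \eqref{eq11}, $u=B^+v+(C,\Phi)$, into \eqref{eq1}. Because the range of $B$ has positive codimension, for $v\in E_{2\infty-k}$ one has $BB^+v=(I-Q_{m-n})v$ rather than $v$; collecting terms then produces \eqref{eq12} together with the normalising constraint $\langle v,\psi_i\rangle=0$, $i=\overline{1,n}$, which simply records that the part of $v$ lying along the first $n$ adjoint Jordan chains has been absorbed into the coordinate term $(C,\Phi)$.

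Next I would project \eqref{eq12} onto $E_{2k}$, onto $\mathrm{span}\{z_{n+1},\dots,z_m\}$, and onto $E_{2\infty-(k+m-n)}$. The cross terms are treated exactly as before: by the first corollary $Q_kA_iB^+(I-Q_k)=0$ and $(I-Q_k)A_iB^+Q_k=0$, so each $A_iB^+$ leaves $E_{2\infty-k}$ invariant; and by the second corollary together with condition~2 we have $B\Phi=\mathcal{A}_BZ$, $A_i\Phi=\mathcal{A}_iZ$ with $(I-Q_k)Z=0$, hence $(I-Q_k)B\Phi=0$ and $(I-Q_k)A_i\Phi=0$. These relations decouple the three projections and yield precisely \eqref{eq13}, \eqref{eq14}, \eqref{eq15}: the first is a finite system of order-$q_0$ PDEs for the $\mathbb{R}^k$-valued function $C(x)$, while the other two involve $v$ alone.

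The crux — and the step I expect to be the main obstacle — is to show that the pair \eqref{eq14}, \eqref{eq15} imposes nothing beyond solving the single regular equation \eqref{eq16}, $\tilde{\mathcal{L}}v=(I-Q_k)f$, in the smaller subspace $E_{2\infty-k}\cap E_{2\infty-(m-n)}$. The operator $\tilde{\mathcal{L}}$ of \eqref{eq9} is regular: its leading part $L_0(\partial/\partial x)$ carries the bounded coefficient $I$, and the perturbation $\sum_{i=1}^q L_i(\partial/\partial x)A_iB^+$ is of strictly lower differential order with coefficients bounded on $E_{2\infty-k}$, so \eqref{eq16} is solvable there. One must then verify two things: (i) $\tilde{\mathcal{L}}$ maps $E_{2\infty-k}\cap E_{2\infty-(m-n)}$ into itself, which follows from invariance of $E_{2\infty-(m-n)}$ under each $A_iB^+$, again a consequence of the $(Q_k,Q_k)$-commutability of the first corollary; and (ii) any solution $v$ of \eqref{eq16} with $Q_{m-n}v=0$ automatically satisfies \eqref{eq14} and \eqref{eq15}. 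For (ii) I would apply $Q_{m-n}$ and $(I-Q_k-Q_{m-n})$ to \eqref{eq16} and use $Q_{m-n}Q_k=0$ to recover \eqref{eq14} and \eqref{eq15} term by term. Once $v$ is pinned down this way, \eqref{eq13} determines $C(x)$; and since the decomposition \eqref{eq11} of an arbitrary solution $u$ of \eqref{eq1} is forced by the projectors $P_k$ and $Q_k$, this gives the stated representation. The genuinely delicate points are the invariance claim (i) and checking that no hidden compatibility condition on $f$ arises in passing from \eqref{eq14}--\eqref{eq15} to \eqref{eq16}; the remaining bookkeeping is identical to the case $m\le n$.
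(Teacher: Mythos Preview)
Your proposal is correct and follows essentially the same route as the paper: the three-fold splitting of $E_2$, the ansatz \eqref{eq11} with $BB^+v=(I-Q_{m-n})v$, the projections yielding \eqref{eq13}--\eqref{eq15}, and the reduction of \eqref{eq14}, \eqref{eq15} to the single regular equation \eqref{eq16} via the identity $Q_{m-n}Q_k=0$. The paper's argument is terser---it simply asserts that $Q_{m-n}v=0$ together with $Q_{m-n}Q_k=0$ forces a solution of \eqref{eq16} to satisfy \eqref{eq14} and \eqref{eq15}---whereas you make the invariance check (i) explicit; but the substance is the same.
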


\section{Examples}
 Let operator $B$ be Fredholm $(m=n)$. Then  on the basis of theorems 1,  2 the problem of the choice of correct boundary conditions for  equations (\ref{eq5}), (\ref{eq10}) and (\ref{eq16}), (\ref{eq13}) can be solved for series of concrete differential operators $L_0(\frac{\partial}{\partial x})$ and $L_1(\frac{\partial}{\partial x}).$
 
\textbf{Example 1}

 Consider the equation
 \begin{equation}
 \frac{\partial^2}{\partial x\partial y}B u(x,y)+Au(x,y)=f(x,y)
 \label{eq17}
 \end{equation}
 This equation with usual Goursat conditions $u|_{x=0}=0, \, \, \, u|_{y=0}=0$
  and arbitrary right part evidently has not classical solution.
  
  Let operators $B$ and $A$ satisfy to condition 1, $k=p_1+\cdots+p_n,$ $p_i$
  is a lengths of $A$-Jordan chains of operator $B$

   Then according our theory we can put such conditions on projections of solution:
   \begin{equation}
  (I-P_k)u|_{x=0}=0, \, \, \, (I-P_k)u(x,y)|_{y=0}=0
  \label{eq18}
  \end{equation}
   
   As result we can to construct the following  unique classical solution
$$
u(x,y)=\int_0^x\int_0^y\Gamma\sum_{r=0}^{\infty}(-1)^r(A\Gamma)^r\frac{(x_1-x)^r}{r!}\frac{(y_1-y)^r}{r!}(I-Q_k)f(x_1,y_1)dy_1dx_1+
$$   
$$
+\sum_{i=1}^n \sum_{j=1}^{p_i}
 C_{ij}(x,y)\phi_i^{(j)},
$$
   where $\Gamma=(B+\sum_{i=1}^n < \cdot, \gamma_i > z_i)^{-1}$ is the bounded operator (see Scmidt lemma  in \cite{lit1}) 
   Functions $C_{ij}(x,y)$  are defined recursively
$$
C_ip_i(x,y)=\beta_{i1}(x,y),
$$   
$$
C_ip_{i-1}(x,y)=\beta_{i2}(x,y)-\frac{\partial^2}{\partial x \partial y}C_ip_i(x,y)
$$   
$$
C_ip_{i-2}(x,y)=\beta_{i3}(x,y)-\frac{\partial^2}{\partial x \partial y}C_ip_{i-1}(x,y),
$$   
$$
\cdots
$$   
where $\beta_{is}(x,y)=<f(x,y),\psi_i^{(s)}>, \, \, \, i=\overline{1,n}, \, \, \, s=\overline{1,p_i}.$
   
   Evidently our solution of this special Goursa task continuously depends from right part, if $f(x)\in C^{(p)}$, where $p=max(p_1,\cdots , p_n).$

\textbf{Example 2}

Consider the equation
\begin{equation}
\frac{\partial u(t,x)}{\partial t}-3\int_0^1xs\frac{\partial u(t,s)}{\partial t}ds=u(t,x)+f(t,x) 
\label{eq19}
\end{equation}
with condition
\begin{equation}
u(0,x)-3\int_0^1xsu(0,s)ds=0.
\label{eq20}
\end{equation}
According of (\ref{eq11})  we can construct solution as the sum
$u(t,x)=v(t,x)+c(t)x ,$  where $\int_0^1xv(t,x)dx=0,$ $c(t)=-3\int_0^1xf(t,x)dx$
$$
\frac{\partial v}{\partial t}=v+f(t,x)-3\int_0^1xsf(t,s)ds,
$$    
$$
v|_{t=0}=0.
$$    
Therefore, we have the unique solution of example 2
$$
u(t,x)=\int_0^te^{t-z}\biggl((f(z,x)-3\int_0^1xsf(z,s)ds)\biggr)dz-3\int_0^1xsf(t,s)ds.    
$$

\textbf{Example 3}

Consider the integro-differential equation with order  2
\begin{equation}
\frac{\partial^2u(t,x)}{\partial t^2}-3\int_0^1xs\frac{\partial^2 u(t,s)}{\partial t}ds=\frac{\partial u(t,x)}{\partial t}+f(t,x).
\label{eq21}
\end{equation} 
Continuous function $f(t,x)$ is defined  under $x\in[0,1], \, \, \, t \geq 0.$
Cauchy problem with standard  conditions $u|_{t=0}=0$  $$\frac{\partial u}{\partial t}|_{t=0}=0$$ unsolvable  under arbitrary function $f(t,x)$.    

Projector $P=3\int_0^1xs[\cdot]ds$ corresponds to Fredholm operator
$$
 B=I-3\int_0^1xs[\cdot]ds.
$$    
We can use theorem 1.  Therefore introducing special conditions $u|_{t=0}=0,$
 \begin{equation}
(I-P)\frac{\partial u}{\partial t}\biggl|_{t=0} =0
 \label{eq22}
 \end{equation}
 we can construct solution  as the sum
 $$
 u(t,x)=v(t,x)+c(t)x,
 $$ where $Pv=0.$
 Functions $v(t,x)$ and $c(t)$  can be found from two simplest Cauchy  problems
 \begin{equation*}
 \begin{cases}
   \frac{\partial^2 v}{\partial t^2}=\frac{\partial v}{\partial t}+\frac{dc}{dt}x+f(t,x)\\
   v|_{t=0}=0 \, \, \, \frac{\partial v}{\partial t}|_{t=0}=0,
 \end{cases}
\end{equation*}

\begin{equation*}
 \begin{cases}
   \frac{dc}{dt}+Pf=0\\
  c(0)=0.
 \end{cases}
\end{equation*}

Conditions (\ref{eq22}) were induced by our theorem 1
  As  result we easily can to construct  desired classical solution of  the task (\ref{eq21}), (\ref{eq22})
 $$
 u(t,x)=\int_0^t(e^{t-s}-1)f(s,x)ds-3x\int_0^t\int_0^1 e^{t-s}xf(s,x)dxds.
 $$
 
\textbf{Example 4}

Consider the equation
\begin{equation}
\frac{\partial^2}{\partial x^2}Bu(x,y)+\frac{\partial}{\partial y}Au(x,y)=f(x,y)
\label{eq23}
\end{equation}

Let $B$ be a Fredholm opertor, $(\phi_1,\cdots , \phi_n)$ be a basis in $N(B)$, $(\psi_1, \cdots , \psi_n)$ be a basis in $N(B^{\star})$.

Let 
$$
<A\phi_i,\psi_k>=
\begin{cases}
  1, \, \, \, i=k\\
  0, \, \, \, i\neq k,
 \end{cases}
 \, \, \, P=\sum^n_1<\cdot, A^{\star}\psi_i>\phi_i, \, \, \, Q=\sum^n_1<\cdot,\psi_i>A\phi_i.
$$
Then according of theorem 1 we can put conditions on projetcions:
\begin{equation}
(I-P)u|_{x=0}=0, \, \, \, (I-P)\frac{\partial u}{\partial x}|_{x=0}=0, \, \, \, Pu|_{y=0}=0.
\label{eq24}
\end{equation}
As result we have unique classical solution in form of the sum 
\begin{equation}
u(x,y)=\Gamma v(x,y)+\sum_1^n\int_0^y<f(x,y),\psi_i>dy\phi_i,
\label{eq25}
\end{equation}
where
$$
\Gamma=(B+\sum_1^n<\cdot,A^{\star}\psi_i>A\phi_i)^{-1}
$$
is the bounded operator. Function $v(x,y)$ is the unique solution of the regular Cauchy task
$$
\frac{\partial^2 v(x,y)}{\partial x^2}+A\Gamma\frac{\partial v(x,y)}{\partial y}=(I-Q)f(x,y), \, \, \, v|_{x=0}=0, \, \, \, \frac{\partial v}{\partial x}|_{x=0}=0.
$$
If $f(x,y)$ is analytic function, then $$v(x,y)=\sum_{i=2}^{\infty}C_i(y)x^i,$$ where $$C_2(y)=\frac{1}{2!}(I-Q)f(0,y),$$ $$C_3(y)=\frac{1}{3!}(I-Q)\frac{\partial f(x,y)}{\partial x}|_{x=0},$$ $$  C_4(y)=\frac{1}{4!}(I-Q)\frac{\partial^2 f(x,y)}{\partial x^2}|_{x=0}-\frac{1}{12}A\Gamma\frac{dC_2(y)}{dy},$$
$$
\cdots
$$
Therefore, we have the next asymptotic of solution
$$
u(x,y)=\frac{1}{2}x^2\Gamma f(0,0)+(y-\frac{x^2}{2})\sum_1^n<f(0,0), \psi_i>\phi_i+O(y^2+|x|^3).
$$

\textbf{Example 5}

Consider the equation of 5th order
\begin{equation}
\frac{\partial^3}{\partial t^3}\biggl( \frac{\partial^2}{\partial x^2}+1\biggr)u(x,y,t)+\biggl(\frac{\partial^2}{\partial y^2}+\lambda\biggr)u(x,y,t)=f(x,y,t)
\label{eq26} 
\end{equation}

with boundary conditions

\begin{equation}
u|_{x=0}=0, \, \, \, u|_{x=\pi}=0,
\label{eq27}
\end{equation}

\begin{equation}
u|_{y=0}=0, \, \, \, u|_{y=\pi}=0.
\label{eq28}
\end{equation}

Operator $B=\frac{\partial^2}{\partial x^2}+1$ with condition (\ref{eq27})  maps from $C^{\circ (2)}_{[0,1]}$ in $C_{[0,1]}$. 

Let $\lambda\neq n^2$, $f(x,y,t)$ is continuous function in domain $0\leq x\leq1$, $0\leq y\leq1$, $t\geq 0$.

Let's introduce initial conditions 

\begin{equation}
(I-P)\frac{\partial ^i u(x,y,t)}{\partial t^i}\biggr|_{t=0}=0, \, \, \, i=0,1,2,
\label{eq29}
\end{equation}

where $$P=\frac{2}{\pi}
\int^{\pi}_0\sin x \sin s [\cdot]ds$$ is  projector on $Ker B$.
Then from the proof of theorem 1 it  follows that the equation (\ref{eq26}) with the initial and boundary conditions (\ref{eq27}), (\ref{eq28}), (\ref{eq29})  has the unique classic solution.

\section{Conclusion}

In papers \cite{lit2, lit3, lit4} given the general way how to construct set of correct boundary condition for equation (\ref{eq1}).
For example some authors effectively exploited in mathematical modeling of complex problems boundary Showalter-Sidorov conditions. Such conditions  can be obtained as a partial variant of above stated approach.
Individual interest is represented solution of irregular system PDE (\ref{eq1}), when operator B is assumed to enjoy the skeleton decomposition \cite{lit14}.

 In paper \cite{lit14}    the concept of  a skeleton chains of linear operator is introduce. In this situation the problem of solution singular PDE (\ref{eq1}) 
 also can be reduced to regular split systems of equations.
  The corresponding systems also can be solved with respect  taking into account certain initial and boundary conditions. However the effective use of present results for the applications will be in the future. The development and applications of our functional approach for other nonequal linear and nonlinear integral and integro-differentional systems you can see in references and mathematical reviews (for example, see MR3721762, MR1959647, MR 0810400, MR3343641, MR2920089, MR279574, MR2675324, MR3201397, etc.). This research is supported by Irkutsk State University as part of the project ``Singular operator-differential systems of equations and mathematical models with parameters''.

\bigskip

\textbf{Sidorov Nikolay Alexandrovich}, Doctor of Sciences (Physics and Mathematics),Professor,  Fellow Emiritus,
Irkutsk State University,
1, K. Marks St., Irkutsk, 664003
tel.: (3952)242210
\email{sidorovisu@gmail.com}

\end{article}

\end{document}